\newtheorem{lemma}{Lemma}[section]
\newtheorem{theorem}{Theorem}[section]
\newtheorem{proposition}[lemma]{Proposition}
\theoremstyle{definition}
\numberwithin{equation}{section}
\def\le{\leqslant}
\def\pmod #1{\ ({\rm mod}\ #1)}
\def \N {{\mathbb N}}
\def \Z {{\mathbb Z}}
\def\f{\frac}
\begin{document}
\title[restricted sumsets]
{On restricted sumsets over a field}
\author{Lilu ZHAO}
\email{zhaolilu@gmail.com}
\address{School of Mathematics, Hefei University of Technology, Hefei,
People's Republic of China}

\begin{abstract}
We consider restricted sumsets over field $F$.
Let\begin{align*}C=\{a_1+\cdots+a_n:a_1\in A_1,\ldots,a_n\in A_n,
a_i-a_j\notin S_{ij}\ \text{if}\ i\not=j\},\end{align*} where
$S_{ij}(1\leqslant i\not=j\leqslant n)$ are finite subsets of $F$
with cardinality $m$, and $A_1,\ldots, A_n$ are finite nonempty
subsets of $F$ with $|A_1|=\cdots=|A_n|=k$. Let $p(F)$ be the
additive order of the identity of $F$. It is proved that
$|C|\geqslant \min\{p(F),\ \ n(k-1)-mn(n-1)+1\}$ if $p(F)>mn$.
This conclusion refines the result of Hou and Sun \cite{HS}.
\end{abstract}

\maketitle


{\let\thefootnote\relax\footnotetext{2010 Mathematics Subject
Classification: 11B13 (11C08, 11T06)}}

{\let\thefootnote\relax\footnotetext{Keywords: sumset, field, the
polynomial method}}

\section{Introduction}

Let $F$ be a field. Denote by $p(F)$ the additive order of the
identity of $F$. It is well-known that $p(F)$ is either infinite
or a prime. For a finite set $A$, we use $|A|$ to denote the
cardinality of $A$.

 Suppose that $A_1,\ldots, A_n$ are finite nonempty subsets of $F$ with $|A_j|=k_j$ for $1\leqslant j\leqslant
 n$. The Cauchy-Davenport Theorem asserts that
\begin{align*}|\{a_1+\cdots+a_n:\ a_1\in A_1,\ldots,a_n\in A_n\}|\geqslant \min\{p(F),k_1+\cdots+k_n-n+1\}.\end{align*}
Let $A$ be a finite subset of $F$. We
define\begin{align*}n^{\wedge}A=\{a_1+\cdots+a_n:a_1,\ldots,a_n\in
A,\ a_1,\ldots,a_n \ \textrm{are distinct}\}.\end{align*} P.
Erd\H{o}s and H. Heilbronn \cite{EH} conjectured that
$$|2^{\wedge}A|\geqslant \min\{p(F),\ 2|A|-3\}.$$ This conjecture
was solved by Dias da Silva and Hamidoune \cite{DH}, who
established
\begin{align*}|n^{\wedge}A|\geqslant \min\{p(F),\ n|A|-n^2+1\}.\end{align*}
In 1995-1996, Alon, Nathanson and Ruzsa \cite{ANR1,ANR2} developed
the polynomial method to show if $0<k_1<k_2<\cdots<k_n$, then
\begin{align*}&|\{a_1+\cdots+a_n:a_i\in A_i,\ a_1,\ldots,a_n \ \textrm{are distinct}\}|
\geqslant \min\{p(F),\sum_{j=1}^{n}(k_j-j)+1\}.\end{align*}
Various restricted sumsets of $A_1,\ldots, A_n$ were
 investigated in \cite{HS}, \cite{LiuSun}, \cite{PS}, \cite{Sun},
 \cite{Sun2} and \cite{SunY}.
In particular, Hou and Sun \cite{HS} considered the following
sumset
\begin{align}\label{defC}C=\{a_1+\cdots+a_n:a_1\in
A_1,\ldots,a_n\in A_n, a_i-a_j\notin S_{ij}\ \text{if}\
i\not=j\},\end{align} where $S_{ij}(1\leqslant i\not=j\leqslant
n)$ are finite subsets of $F$. Hou and Sun \cite{HS} established
the following result.
\begin{theorem}[Hou-Sun]
\label{t1} Let $C$ be given by (\ref{defC}) with
$|S_{ij}|=m(1\leqslant i\not=j\leqslant n)$. If
$|A_1|=\cdots=|A_n|=k$ and $p(F)>\max\{n(k-1)-mn(n-1),mn\}$, then
$$|C|\geqslant n(k-1)-mn(n-1)+1.$$
\end{theorem}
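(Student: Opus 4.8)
\medskip
\noindent\textbf{Proof plan.}
Set $N=n(k-1)-mn(n-1)$; we may assume $N\ge 0$, since otherwise $|C|\ge 0\ge N+1$ and there is nothing to prove. The plan is to argue by contradiction via the polynomial method. Assume $|C|\le N$. Since $p(F)>N$, the field $F$ has more than $N$ elements, so we may choose $C'$ with $C\subseteq C'\subseteq F$ and $|C'|=N$, and put
\[
f(x_1,\dots,x_n)=\prod_{1\le i\ne j\le n}\ \prod_{s\in S_{ij}}(x_i-x_j-s)\ \cdot\ \prod_{c\in C'}\bigl(x_1+\cdots+x_n-c\bigr)\ \in\ F[x_1,\dots,x_n].
\]
I would first check that $f$ vanishes on the whole box $A_1\times\cdots\times A_n$: for $(a_1,\dots,a_n)$ in this box, either $a_i-a_j\in S_{ij}$ for some $i\ne j$, so the factor $x_i-x_j-(a_i-a_j)$ of the first product vanishes there, or else $a_i-a_j\notin S_{ij}$ for all $i\ne j$, so $a_1+\cdots+a_n\in C\subseteq C'$ by the definition \eqref{defC} of $C$, and the factor $x_1+\cdots+x_n-(a_1+\cdots+a_n)$ of the second product vanishes there.

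Next comes the degree bookkeeping. The first product is a product of $mn(n-1)$ linear forms and the second of $N$ linear forms, so $\deg f=mn(n-1)+N=n(k-1)$, and the top homogeneous part of $f$ is
\[
\prod_{1\le i\ne j\le n}(x_i-x_j)^m\cdot\bigl(x_1+\cdots+x_n\bigr)^{N}=(-1)^{m\binom{n}{2}}\prod_{1\le i<j\le n}(x_i-x_j)^{2m}\cdot\bigl(x_1+\cdots+x_n\bigr)^{N}.
\]
Hence the coefficient of $x_1^{k-1}\cdots x_n^{k-1}$ in $f$ equals $(-1)^{m\binom{n}{2}}\kappa$, where
\[
\kappa:=\bigl[x_1^{k-1}\cdots x_n^{k-1}\bigr]\ \prod_{1\le i<j\le n}(x_i-x_j)^{2m}\cdot\bigl(x_1+\cdots+x_n\bigr)^{N}.
\]
The key point --- and the step I expect to be the main obstacle --- is the non-vanishing claim: \emph{$\kappa\ne 0$ in $F$ whenever $p(F)>\max\{mn,N\}$.} Granting this, the Combinatorial Nullstellensatz applied to $f$ with the sets $A_1,\dots,A_n$ and the exponents $t_1=\cdots=t_n=k-1$ --- note that $\deg f=\sum_l t_l$, that $|A_l|=k>t_l$, and that the coefficient of $\prod_l x_l^{t_l}$ in $f$ is nonzero in $F$ --- produces a point of $A_1\times\cdots\times A_n$ at which $f\ne 0$, contradicting the vanishing established above. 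Therefore $|C|\ge N+1$, as claimed in Theorem~\ref{t1}.

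It remains to prove the non-vanishing of $\kappa$. I would recast it as a constant term. Using the identity $\prod_{i\ne j}(1-x_i/x_j)^m=(-1)^{m\binom{n}{2}}\bigl(\prod_{i<j}(x_i-x_j)^{2m}\bigr)/\prod_l x_l^{m(n-1)}$ together with $N=n\bigl((k-1)-m(n-1)\bigr)$, one obtains, writing $r:=(k-1)-m(n-1)=N/n$,
\[
\kappa=(-1)^{m\binom{n}{2}}\ \mathrm{CT}\!\left[\,\prod_{1\le i\ne j\le n}\Bigl(1-\frac{x_i}{x_j}\Bigr)^{m}\cdot\frac{\bigl(x_1+\cdots+x_n\bigr)^{N}}{(x_1\cdots x_n)^{r}}\,\right],
\]
where $\mathrm{CT}$ denotes the constant term of a Laurent polynomial in $x_1,\dots,x_n$. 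For $N=0$ (so $r=0$) this is exactly Dyson's constant term identity, giving $\kappa=(-1)^{m\binom{n}{2}}\frac{(mn)!}{(m!)^n}$, a multinomial coefficient all of whose prime factors are at most $mn$; this both uses and explains the hypothesis $p(F)>mn$. For $N>0$ the plan is to expand $(x_1+\cdots+x_n)^N$ by the multinomial theorem and evaluate the resulting Dyson-type constant terms $\mathrm{CT}\bigl[x^{\beta}\prod_{i\ne j}(1-x_i/x_j)^m\bigr]$ with $\sum_l\beta_l=0$, and then to verify --- via Kummer's/Legendre's formula for $p$-adic valuations --- that the outcome is a nonzero integer none of whose prime factors exceeds $\max\{mn,N\}$, whence $p(F)>\max\{mn,N\}$ forces $\kappa\ne 0$ in $F$. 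As a sanity check, for $n=2$ and $m=1$ one finds $\kappa=-\frac{2}{r+1}\binom{2r}{r}$ with $r=N/2$, and every prime factor of this number is indeed $\le\max\{2,N\}$. Turning this coefficient evaluation into a clean closed form, or at least into the sharpest estimate that exhibits the required prime bound, is the technical core of the argument.
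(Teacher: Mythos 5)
Your reduction is sound: the auxiliary polynomial with the extra factor $\prod_{c\in C'}(x_1+\cdots+x_n-c)$, the vanishing on $A_1\times\cdots\times A_n$, the degree count, and the identification of the top coefficient with
\[
\kappa=\bigl[x_1^{k-1}\cdots x_n^{k-1}\bigr]\prod_{1\le i<j\le n}(x_i-x_j)^{2m}\,(x_1+\cdots+x_n)^{N}
\]
are all correct; this is exactly the standard derivation of the Alon--Nathanson--Ruzsa lemma (Lemma~\ref{L0} in the paper), so up to that point you have merely re-proved the tool the paper quotes. The problem is that everything you have written reduces the theorem to the non-vanishing of $\kappa$ in $F$, and that non-vanishing \emph{is} the theorem: it is the entire technical content, and you leave it as a plan. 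Your proposed route for $N>0$ --- expand $(x_1+\cdots+x_n)^N$ multinomially and evaluate the shifted Dyson constant terms $\mathrm{CT}\bigl[x^{\beta}\prod_{i\ne j}(1-x_i/x_j)^m\bigr]$ --- does not go through as stated: those shifted constant terms have no general closed product formula (only very special shifts, such as those in Aomoto's identity, are known), the resulting signed sum could a priori cancel, and the claim that the total is a nonzero integer with all prime factors $\le\max\{mn,N\}$ cannot be verified by Kummer/Legendre without an exact evaluation in hand. Your $n=2$, $m=1$ Catalan-number check is correct but does not indicate how to handle the general case.

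What actually closes the gap is an exact closed form for $\kappa$. The paper obtains (Proposition~\ref{L1}, specialized to $s=0$)
\[
\kappa=\pm\,\frac{N!}{(m!)^n}\prod_{j=1}^{n}\frac{(jm)!}{(k-1-(j-1)m)!},
\]
by combining Aomoto's constant term identity with the Gessel--Lv--Xin--Zhou lemma (Lemma~\ref{L2}), which extracts the leading coefficient in the parameter $a_0$ and converts the constant-term identity into exactly the monomial coefficient needed. From this product of factorials the prime bound is immediate: every prime factor of the numerator is at most $\max\{N,mn\}$, and the hypothesis $k>m(n-1)$ (equivalently $N\ge0$) guarantees the expression is a well-defined nonzero integer. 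Hou and Sun's original proof of Theorem~\ref{t1} likewise rests on an explicit constant-term evaluation of this type. So your proposal is a correct framing with the decisive step missing, and the missing step cannot be supplied by the elementary expansion you sketch; it requires a Dyson/Aomoto-type identity.
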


The first result of this paper is to refine Theorem \ref{t1}.
\begin{theorem}
\label{t2} Let $S_{ij}(1\leqslant i\not=j\leqslant n)$ be finite
subsets of $F$ with cardinality $m$, and let $C$ be defined in
(\ref{defC}). Suppose that $|A_j|\in\{k,k+1\}$ for $1\leqslant
j\leqslant n$. If $p(F)>\max\{mn,
\sum_{j=1}^n(|A_j|-1)-mn(n-1)\}$, then
\begin{equation}\label{1.2}|C|\geqslant
\sum_{j=1}^n(|A_j|-1)-mn(n-1)+1.\end{equation}
\end{theorem}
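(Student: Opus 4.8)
The plan is to use the polynomial method in the style of Alon--Nathanson--Ruzsa, adapted to the restricted-difference condition as in Hou--Sun. Write $k_j = |A_j|$, set $N = \sum_{j=1}^n (k_j - 1) - mn(n-1)$, and suppose for contradiction that $|C| \le N$, so we may choose a set $E \subseteq F$ with $C \subseteq E$ and $|E| = N$ (using $N < p(F)$ so that such a set exists and the field is large enough). Consider the polynomial
\begin{equation*}
P(x_1,\ldots,x_n) = \prod_{c \in E}\Bigl(x_1 + \cdots + x_n - c\Bigr)\prod_{1 \le i < j \le n}\ \prod_{s \in S_{ij} \cup (-S_{ji})}\bigl(x_i - x_j - s\bigr),
\end{equation*}
wait --- more carefully, the restriction $a_i - a_j \notin S_{ij}$ for all ordered pairs $i\ne j$ should be encoded by $\prod_{i \ne j}\prod_{s \in S_{ij}}(x_i - x_j - s)$, a product of $mn(n-1)$ linear forms, so that $\deg P = N + mn(n-1) = \sum_{j=1}^n (k_j - 1)$. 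By construction $P$ vanishes on $A_1 \times \cdots \times A_n$: if $(a_1,\ldots,a_n)$ violates some difference condition the second product kills it, and otherwise $a_1 + \cdots + a_n \in C \subseteq E$ so the first product kills it.

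The heart of the argument is the Combinatorial Nullstellensatz (Alon): I would show that the coefficient of the monomial $\prod_{j=1}^n x_j^{k_j - 1}$ in $P$ is nonzero, which contradicts the vanishing of $P$ on the box $A_1 \times \cdots \times A_n$ (whose sides have sizes $k_j$). To extract this coefficient it suffices to compute, in the expansion of $P$, the top-degree part: replacing the first product by $(x_1 + \cdots + x_n)^N$ and each factor $x_i - x_j - s$ by $x_i - x_j$, the coefficient of $\prod x_j^{k_j-1}$ in $P$ equals the coefficient of the same monomial in
\begin{equation*}
(x_1 + \cdots + x_n)^{N}\ \prod_{1 \le i < j \le n}(x_i - x_j)^{2m}.
\end{equation*}
So everything reduces to a purely algebraic nonvanishing statement about this explicit polynomial. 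Here is where the hypothesis $|A_j| \in \{k, k+1\}$, rather than all equal, matters and where the refinement over Hou--Sun lies: the exponents $k_j - 1$ are not all equal, so one cannot directly invoke the Dias da Silva--Hamidoune / symmetric-function evaluation. I expect the main obstacle to be precisely this coefficient computation: showing that the coefficient of $\prod_j x_j^{k_j-1}$ in $(x_1+\cdots+x_n)^N \prod_{i<j}(x_i-x_j)^{2m}$ is nonzero in $F$, given the characteristic bound $p(F) > mn$ and $p(F) > N$.

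For that computation I would proceed as follows. Since $\prod_{i<j}(x_i - x_j)^{2m}$ is symmetric and $(x_1+\cdots+x_n)^N$ is symmetric, expand $(x_1+\cdots+x_n)^N$ multinomially and reduce to understanding, for each exponent vector, the coefficient of a given monomial in $\prod_{i<j}(x_i-x_j)^{2m}$. A cleaner route is to view the coefficient-extraction as applying the linear functional ``coefficient of $\prod x_j^{k_j-1}$'' and to use the fact that $\prod_{i<j}(x_i - x_j)^{2m}$ has, as its expansion, signed sums indexed by ways of distributing exponents; the key identity is that the coefficient of $\prod_j x_j^{b_j}$ in $(x_1+\cdots+x_n)^N\prod_{i<j}(x_i-x_j)^{2m}$, when $\sum b_j = N + mn(n-1)$, can be written as $\frac{N!}{\text{(something)}}$ times a determinant or a product of factorials of the form $\prod (b_j + \text{small})!$ --- all of whose prime factors are $< mn \le p(F)$ or otherwise controlled, hence nonzero in $F$. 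Concretely I would aim to show the coefficient equals (up to sign) a ratio of factorials each of whose arguments is strictly less than $p(F)$; the hypotheses $b_j = k_j - 1 \le k \le$ (roughly $N/n + mn$) together with $p(F) > \max\{mn, N\}$ should make every factorial appearing invertible. The case distinction $k_j \in \{k,k+1\}$ keeps the exponents within a narrow window so that this factorial bookkeeping goes through; if the $k_j$ ranged widely one would need the stronger ANR hypothesis $k_1 < k_2 < \cdots < k_n$ instead. Once the coefficient is shown nonzero, the Combinatorial Nullstellensatz gives a point of $A_1 \times \cdots \times A_n$ where $P$ does not vanish, the desired contradiction, completing the proof.
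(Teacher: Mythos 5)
Your setup is essentially the paper's: you encode the difference restrictions by $\prod_{i\neq j}\prod_{s\in S_{ij}}(x_i-x_j-s)$, observe that only the top-degree part matters, and reduce everything to showing that the coefficient of $\prod_j x_j^{k_j-1}$ in $(x_1+\cdots+x_n)^{N}\prod_{i\neq j}(x_i-x_j)^{m}$ is nonzero in $F$ (your $\prod_{i<j}(x_i-x_j)^{2m}$ differs from this only by a sign). The Nullstellensatz-with-an-auxiliary-set-$E$ framing versus the paper's direct use of the Alon--Nathanson--Ruzsa lemma is an immaterial repackaging. Up to this point you are on track.

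The genuine gap is that you stop exactly where the real work begins. The entire content of the theorem, and the reason it refines Hou--Sun, is the explicit evaluation of that coefficient, and ``a ratio of factorials \ldots the factorial bookkeeping should go through'' is not a proof: it is not at all clear by elementary expansion or by a determinant identity that this coefficient admits a closed product form, and the narrow window $k_j\in\{k,k+1\}$ does not by itself make a multinomial computation tractable. The paper obtains the value
\begin{equation*}
\Big(\prod_{j=0}^{s-1}\frac{1}{k-jm}\Big)\frac{\big(\sum_{j=1}^{n}k_j-mn^2+mn-n\big)!}{(m!)^n}
\prod_{j=1}^{n}\frac{(jm)!}{(k-1-jm+m)!}
\end{equation*}
by combining Aomoto's constant term identity (a nontrivial extension of Dyson's conjecture, which is what allows exactly $s$ of the exponents to be bumped from $k$ to $k+1$) with a lemma of Gessel--Lv--Xin--Zhou identifying the leading coefficient, in the parameter $a_0$, of such a constant term with the coefficient one actually needs. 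Only once this closed form is in hand does the hypothesis $p(F)>\max\{mn,\,N\}$ finish the job, since every factorial in the numerator has argument less than $p(F)$ and the expression is a $p$-integral rational whose numerator is then prime to $p(F)$. Without citing or reproving an Aomoto-type identity, your argument does not close; if all $k_j$ were equal you could fall back on Dyson's identity (which is essentially the Hou--Sun case), but the mixed case $\{k,k+1\}$ is precisely what your sketch leaves unproved.
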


It was pointed out by Hou and Sun \cite{HS} that when
$p(F)\leqslant n(k-1)-mn(n-1)$, Theorem \ref{t1} only implies
\begin{align}\label{C}|C|\geqslant n\Big\lfloor \frac{p(F)-1}{n}\Big\rfloor+1,\end{align}
where $\lfloor\alpha\rfloor$ denotes the greatest integer not
exceeding real number $\alpha$. Now we can deduce the following
result from Theorem \ref{t2}.

\begin{theorem}
\label{t3} Suppose that $|S_{ij}|=m(1\leqslant i\not=j\leqslant
n)$, $|A_1|=\cdots=|A_n|=k$ and $p(F)>mn$. Let $C$ be defined in
(\ref{defC}). We have
$$|C|\geqslant \min\big\{p(F),\ \ n(k-1)-mn(n-1)+1\big\}.$$
\end{theorem}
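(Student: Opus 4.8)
The plan is to derive Theorem \ref{t3} from Theorem \ref{t2} by a short reduction, splitting into two cases according to whether $p(F)$ exceeds $n(k-1)-mn(n-1)$.

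First I would handle the case $p(F)>n(k-1)-mn(n-1)$. Here one applies Theorem \ref{t2} directly to $A_1,\dots,A_n$ themselves: each $|A_j|=k\in\{k,k+1\}$, and the hypothesis $p(F)>\max\{mn,\sum_{j=1}^n(|A_j|-1)-mn(n-1)\}=\max\{mn,n(k-1)-mn(n-1)\}$ holds by assumption. Thus $|C|\ge n(k-1)-mn(n-1)+1$. Since $p(F)>n(k-1)-mn(n-1)$ forces $p(F)\ge n(k-1)-mn(n-1)+1$, the bound just obtained equals $\min\{p(F),\,n(k-1)-mn(n-1)+1\}$, as wanted.

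The main case is $p(F)\le n(k-1)-mn(n-1)$; then $p(F)$ is a finite prime $p$, and the claimed lower bound is simply $\min\{p,\,n(k-1)-mn(n-1)+1\}=p$. The idea is to shrink the $A_j$ to subsets $A_j'\subseteq A_j$ whose cardinalities differ by at most one and on which Theorem \ref{t2} returns precisely $p$. Put $T=p-1+n+mn(n-1)$. I would first verify that $n<T\le nk-1$: the left inequality follows from $p\ge2$, and the right from $p\le n(k-1)-mn(n-1)$. Writing $T=qn+r$ with $q\ge1$ and $0\le r<n$, the bound $T\le nk-1$ gives $q\le k-1$, hence $q+1\le k$. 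Now choose $A_j'\subseteq A_j$ with $|A_j'|=q+1$ for $r$ of the indices $j$ and $|A_j'|=q$ for the remaining $n-r$ (possible because $q+1\le k=|A_j|$). Then $\sum_{j=1}^n(|A_j'|-1)-mn(n-1)=T-n-mn(n-1)=p-1$, so $p=p(F)>\max\{mn,\,p-1\}$ and Theorem \ref{t2} applies to $A_1',\dots,A_n'$, yielding $|C'|\ge p$ for the restricted sumset $C'$ formed from the $A_j'$ with the same sets $S_{ij}$. Since $A_j'\subseteq A_j$ forces $C'\subseteq C$, we conclude $|C|\ge|C'|\ge p=p(F)$.

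I do not anticipate a genuine obstacle; the only point requiring care is the elementary arithmetic showing that the target total size $T=p-1+n+mn(n-1)$ satisfies $n<T\le nk$, so that it is attainable by subsets of the $A_j$ with cardinalities in $\{q,q+1\}$. This is exactly where the standing hypotheses $p(F)>mn$ and $p(F)\le n(k-1)-mn(n-1)$ (together with $p(F)\ge2$) are used.
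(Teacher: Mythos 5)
Your proof is correct and follows essentially the same route as the paper: reduce to the case $p(F)\le n(k-1)-mn(n-1)$ and shrink the $A_j$ to subsets of nearly equal sizes whose total is calibrated so that Theorem \ref{t2} returns exactly $p(F)$ (your $q$ is precisely the paper's $k'=\lfloor\frac{p(F)-1}{n}\rfloor+m(n-1)+1$). The only cosmetic differences are that you invoke Theorem \ref{t2} rather than Theorem \ref{t1} in the easy case and that you spell out the arithmetic ($n<T\le nk-1$) guaranteeing the subsets $A_j'$ exist, which the paper leaves implicit.
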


When $p(F)\leqslant n(k-1)-mn(n-1)$, Theorem \ref{t3} implies
$|C|\geqslant p(F)$, which improves upon the inequality (\ref{C}).
Dias da Silva and Hamidoune \cite{DH} showed that if
$|A|>\sqrt{4p-7}$ then any element of $\Z/p\Z$ is a sum of
$\lfloor\frac{|A|}{2}\rfloor$ distinct elements of $A$. We extend
this result with the help of Theorem \ref{t3}.
\begin{theorem}
\label{t4} Let $p$ be a prime and $S$ be a subset of $F=\Z/p\Z$
with $|S|=m$. Let $A$ be a subset of $F$ with $|A|\geqslant
\sqrt{4mp+4m(m-3)+2}-m+1$. Then any element of $F$ can be written
in the form $a_1+\dots+a_n$ with
$n=\lfloor\frac{|A|-1+m}{2m}\rfloor$ and $a_i-a_j\not\in S$ if
$1\leqslant i\neq j\leqslant n$.
\end{theorem}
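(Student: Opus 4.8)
The plan is to derive Theorem \ref{t4} as a direct consequence of Theorem \ref{t3}, so the work is essentially arithmetic bookkeeping. First I would set $F=\Z/p\Z$, take $S_{ij}=S$ for all $i\neq j$ so that $m=|S|$, and choose $k=|A|$, $A_1=\cdots=A_n=A$. Then the set $C$ of (\ref{defC}) is exactly the set of sums $a_1+\cdots+a_n$ with $a_i\in A$ and $a_i-a_j\notin S$ whenever $i\neq j$. The goal is to pick $n$ as large as possible so that Theorem \ref{t3} forces $|C|\geqslant p$, i.e. $C=F$, which is precisely the claimed conclusion that every element of $F$ is so representable.

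Next I would unwind the hypothesis of Theorem \ref{t3}. We need $p=p(F)>mn$, and we want the lower bound $\min\{p,\ n(k-1)-mn(n-1)+1\}$ to equal $p$; for this it suffices that
\begin{equation*}
n(|A|-1)-mn(n-1)+1\geqslant p,
\end{equation*}
equivalently $n\big(|A|-1+m\big)-mn^2+1\geqslant p$. Viewing the left side as a quadratic in $n$ with negative leading coefficient $-m$, its maximum over real $n$ is attained at $n_0=\frac{|A|-1+m}{2m}$, with value $\frac{(|A|-1+m)^2}{4m}+1$. So if $n=\lfloor n_0\rfloor$ — which is exactly the value of $n$ in the statement — then I must check two things: that this integer $n$ still satisfies the quadratic inequality above, and that $mn<p$. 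Both will follow from the quantitative hypothesis $|A|\geqslant\sqrt{4mp+4m(m-3)+2}-m+1$: squaring gives $(|A|-1+m)^2\geqslant 4mp+4m(m-3)+2 = 4m(p-m)+4m(2m-3)+2$, and from here one extracts $\frac{(|A|-1+m)^2}{4m}+1\geqslant p$ together with enough slack to absorb the loss incurred by replacing $n_0$ with the integer $\lfloor n_0\rfloor$ and to guarantee $mn\leqslant mn_0=\frac{|A|-1+m}{2}<p$.

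The only real obstacle is handling the floor function cleanly: passing from the real maximizer $n_0$ to the integer $n=\lfloor n_0\rfloor$ costs at most something like $m/4$ in the quadratic (since for integers $n$ the value $n(|A|-1+m)-mn^2$ differs from its continuous maximum by at most $m/4$, as the quadratic has the form $-m(n-n_0)^2+\text{const}$ and $|n-n_0|\leqslant 1/2$), and the constant $4m(m-3)+2$ built into the hypothesis is engineered precisely to cover this loss plus the comparison $mn<p$. I would therefore: (i) set up the substitution and invoke (\ref{defC}); (ii) record the quadratic inequality that makes the minimum in Theorem \ref{t3} equal $p$; (iii) verify it for $n=\lfloor n_0\rfloor$ using the hypothesis on $|A|$ and the elementary bound $|n_0-n|\le \tfrac12$; (iv) check $p>mn$ similarly; and (v) conclude $C=F$. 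No step should require anything beyond Theorem \ref{t3} and routine estimates, so I do not expect any conceptual difficulty — just care with the inequalities.
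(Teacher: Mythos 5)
Your overall strategy is exactly the paper's: specialize Theorem \ref{t3} to $A_1=\cdots=A_n=A$, $S_{ij}=S$, check $p>mn$, and reduce everything to the single inequality $n(|A|-1)-mn(n-1)+1\geqslant p$. The reduction and the verification of $p>mn$ are fine. But step (iii), where you absorb the floor, has a genuine gap --- in fact two.

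First, the elementary bound you invoke is wrong for the floor function: with $n=\lfloor n_0\rfloor$ and $n_0=\frac{|A|-1+m}{2m}$ you only get $0\leqslant n_0-n<1$, not $|n_0-n|\leqslant\frac12$ (that would hold for the \emph{nearest} integer). Writing $r=2m(n_0-n)$, so $0\leqslant r\leqslant 2m-1$, the exact loss is
\begin{equation*}
\Big(\tfrac{(|A|-1+m)^2}{4m}+1\Big)-\big(n(|A|-1)-mn(n-1)+1\big)=\tfrac{r^2}{4m},
\end{equation*}
which can be as large as $\frac{(2m-1)^2}{4m}\approx m$, not $\frac m4$. Second, and more seriously, even with the correct bound the hypothesis is \emph{not} ``engineered to cover this loss'' by a pure real-variable estimate: the hypothesis gives $\frac{(|A|-1+m)^2}{4m}\geqslant p+m-3+\frac1{2m}$, and subtracting the worst-case loss $\frac{(2m-1)^2}{4m}$ leaves only $n(|A|-1)-mn(n-1)+1\geqslant p-1+\frac{1}{4m}$, which falls short of $p$ by almost a full unit. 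The missing idea is an integrality (congruence) argument: since $|A|-1+m=2mn+r$, one has $(|A|-1+m)^2\equiv r^2\pmod{4m}$, so $n(|A|-1)-mn(n-1)+1-p=\frac{(|A|-1+m)^2-r^2-4m(p-1)}{4m}$ is an integer; being strictly greater than $-1$, it must be $\geqslant 0$. This is precisely how the paper closes the gap, and without it your chain of inequalities does not reach $p$. You should replace the ``$|n_0-n|\leqslant\frac12$'' step by the computation of $r$, the congruence modulo $4m$, and this rounding-up argument.
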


Section 2 is devoted to some preparations. The proofs of Theorems
\ref{t2}-\ref{t3} will be given in Section 3. Finally, we deduce
Theorem \ref{t4} from Theorem \ref{t3}.

 \vskip5mm
\section{Preliminaries}

For a polynomial $g(x_1,\ldots,x_n)$ over a field $F$, by
$[x_1^{k_1}\cdots x_n^{k_n}]g(x_1,\ldots,x_n)$ we mean the
coefficient of the monomial $x_1^{k_1}\cdots x_n^{k_n}$ in
$g(x_1,\ldots,x_n)$. One has the following tool of the polynomial
method. The reader may refer to the book of Tao and Vu [17, pp.
329-345] for the explanation of the polynomial method.

\medskip
\begin{lemma}[\cite{Al},\cite{ANR2}]\label{L0} Let $A_1,\cdots,A_n$ be
non-empty finite subsets of a field $F$, and let
$P(x_1,\cdots,x_n)\in F[x_1,\cdots,x_n]\backslash \{0\}$. Suppose
that $\deg P\leqslant \sum_{j=1}^n(|A_j|-1)$. If
$$[x_1^{|A_1|-1}\cdots x_n^{|A_n|-1}]P(x_1,\cdots,x_n)
(x_1+\cdots+x_n)^{\sum_{j=1}^n(|A_j|-1)-\deg P}\neq 0,$$ then
$$|\{a_1+\cdots+a_n:a_1\in A_1,\ldots,a_n\in
A_n, P(a_1,\ldots,a_n)\not= 0\}|\geqslant
\sum_{j=1}^n(|A_j|-1)-\deg P+1.$$
\end{lemma}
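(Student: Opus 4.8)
The plan is to use the Combinatorial Nullstellensatz of Alon in the following standard way. Set $k_j=|A_j|-1$ and $d=\deg P$, and introduce the auxiliary polynomial
\begin{align*}
Q(x_1,\ldots,x_n)=P(x_1,\ldots,x_n)\,(x_1+\cdots+x_n)^{\,k_1+\cdots+k_n-d}.
\end{align*}
By hypothesis $d\le k_1+\cdots+k_n$, so the exponent is a nonnegative integer and $Q$ is a genuine polynomial of degree exactly $k_1+\cdots+k_n$, with $[x_1^{k_1}\cdots x_n^{k_n}]Q\neq 0$. The point of multiplying by the power of $x_1+\cdots+x_n$ is precisely to inflate the degree up to $k_1+\cdots+k_n$ so that the top-degree monomial $x_1^{k_1}\cdots x_n^{k_n}$ is available to detect.

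The main step is a counting/contradiction argument. Suppose for contradiction that the sumset
\begin{align*}
C'=\{a_1+\cdots+a_n:\ a_1\in A_1,\ldots,a_n\in A_n,\ P(a_1,\ldots,a_n)\neq 0\}
\end{align*}
has $|C'|\le k_1+\cdots+k_n-d=:\ell$. Pick any superset $E\subseteq F$ of $C'$ with $|E|=\ell$ (if $C'$ is empty the statement is trivial, since the asserted lower bound $\ell+1\ge 1$ would force $C'$ nonempty — handle this degenerate case separately), and form
\begin{align*}
R(x_1,\ldots,x_n)=Q(x_1,\ldots,x_n)\prod_{e\in E}\bigl(x_1+\cdots+x_n-e\bigr).
\end{align*}
Then $\deg R=(k_1+\cdots+k_n)+\ell$. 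I claim the coefficient of $x_1^{k_1+?}\cdots$—more precisely, one should instead argue as follows: whenever $(a_1,\ldots,a_n)\in A_1\times\cdots\times A_n$ with all $P(a_j)\neq 0$, the value $a_1+\cdots+a_n$ lies in $E$, so $\prod_{e\in E}(a_1+\cdots+a_n-e)=0$; and whenever some $P(a_j)=0$ we get $Q(a_1,\ldots,a_n)=0$; hence $R$ vanishes on all of $A_1\times\cdots\times A_n$. Now apply the Combinatorial Nullstellensatz to $R$: it says that if $R$ vanishes on $A_1\times\cdots\times A_n$ then the coefficient of every monomial $x_1^{k_1}\cdots x_n^{k_n}$ with $\deg R$ equal to the total degree... — the cleanest route is to invoke the original Alon theorem directly on $Q$ together with the factor, extracting the monomial $x_1^{k_1}\cdots x_n^{k_n}(x_1+\cdots+x_n)^{\ell}$-type term. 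Concretely: $[x_1^{k_1}\cdots x_n^{k_n}]\bigl(Q\cdot\prod_{e\in E}(x_1+\cdots+x_n-e)\bigr)$ picks up, from the product $\prod_{e\in E}(x_1+\cdots+x_n-e)=(x_1+\cdots+x_n)^{\ell}+(\text{lower order})$, only the leading term $(x_1+\cdots+x_n)^{\ell}$ when paired with the top-degree part of $Q$; but this makes the relevant coefficient equal (up to the multinomial factor $\binom{\ell}{\,\cdot\,}$, which is nonzero because $p(F)>k_1+\cdots+k_n-d\ge\ell$ by the degree hypothesis combined with the prime-order condition implicit in the application) to $[x_1^{k_1}\cdots x_n^{k_n}]Q\neq 0$, contradicting the Nullstellensatz since the monomial of maximal total degree in $R$ has nonzero coefficient yet $R$ vanishes on the grid.

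The hard part — and the one requiring care — is bookkeeping the degrees and the nonvanishing of the multinomial coefficient: one must check that multiplying by $(x_1+\cdots+x_n)^{k_1+\cdots+k_n-d}$ and then by the $\ell$ linear factors does not accidentally kill the coefficient of $x_1^{k_1}\cdots x_n^{k_n}$ in the top-degree component, and that the relevant binomial/multinomial coefficients are invertible in $F$. This is exactly where the sumset cardinality $\ell$ must be strictly less than the characteristic in the downstream applications; within the lemma itself the statement is purely about the existence of the nonzero coefficient, so one either assumes the coefficient hypothesis swallows this (as stated) or one passes to the cleanest formulation: apply Alon's Combinatorial Nullstellensatz to $R=Q\cdot\prod_{e\in E}(x_1+\cdots+x_n-e)$ directly, noting $\deg R=\sum(k_j)+\ell=\sum(k_j)+\sum(k_j)-d$ and that the monomial $x_1^{k_1+\ell}\cdots$? — no; the correct monomial to isolate is one of total degree $\deg R$ and with each exponent at most $k_j=|A_j|-1$, which is impossible once $\ell>0$ unless we absorb the extra $\ell$ degrees into the $(x_1+\cdots+x_n)$-factors, so in fact the right statement is that $R$ cannot vanish identically on the grid while having a nonzero top coefficient, forcing the original assumption $|C'|\le\ell$ to fail. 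I would therefore structure the final write-up as: (i) reduce to showing $|C'|>\ell$; (ii) set up $R$ as above; (iii) observe $R$ vanishes on the grid; (iv) compute $[x_1^{k_1}\cdots x_n^{k_n}](x_1+\cdots+x_n)^{\ell}R'$ for the appropriate cofactor and derive a contradiction with Alon's theorem, the nonvanishing of the multinomial coefficient being the one genuinely delicate point.
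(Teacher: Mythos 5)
The paper does not prove Lemma~\ref{L0} (it is quoted from Alon and Alon--Nathanson--Ruzsa), so your proposal must stand on its own, and as written it does not: the construction of the auxiliary polynomial is wrong. You set $Q=P\cdot(x_1+\cdots+x_n)^{\ell}$ with $\ell=\sum k_j-\deg P$ and then multiply \emph{again} by $\prod_{e\in E}(x_1+\cdots+x_n-e)$, obtaining $R$ of degree $\sum k_j+\ell$. The Combinatorial Nullstellensatz can only be applied to a monomial $x_1^{t_1}\cdots x_n^{t_n}$ with $t_j\le |A_j|-1$ \emph{and} $\sum t_j=\deg R$; since $x_1^{k_1}\cdots x_n^{k_n}$ has total degree $\sum k_j<\deg R$, it detects nothing, and (as you yourself observe near the end) no admissible monomial of degree $\deg R$ exists once $\ell>0$. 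You have noticed the dead end but not repaired it, and the attempted repair via ``multinomial coefficients invertible because $p(F)>\ell$'' is a red herring: the lemma has no characteristic hypothesis, and none is needed.

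The correct construction inflates the degree only once: assume $|C'|\le\ell$, choose $E\supseteq C'$ with $|E|=\ell$, and set
\begin{equation*}
R(x_1,\ldots,x_n)=P(x_1,\ldots,x_n)\prod_{e\in E}\bigl(x_1+\cdots+x_n-e\bigr),
\end{equation*}
so that $\deg R=\deg P+\ell=\sum_{j}(|A_j|-1)$. The monomial $x_1^{k_1}\cdots x_n^{k_n}$ now has total degree equal to $\deg R$, hence its coefficient in $R$ receives contributions only from the top homogeneous components of the two factors; since the top homogeneous part of $\prod_{e\in E}(x_1+\cdots+x_n-e)$ is $(x_1+\cdots+x_n)^{\ell}$, that coefficient equals $[x_1^{k_1}\cdots x_n^{k_n}]P\cdot(x_1+\cdots+x_n)^{\ell}$, which is nonzero by hypothesis --- no binomial or multinomial coefficient ever needs to be inverted. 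Your verification that $R$ vanishes on $A_1\times\cdots\times A_n$ is fine and carries over verbatim, and the Nullstellensatz then gives the contradiction. So the skeleton of your argument (contradiction, vanishing on the grid, CN) is the standard one, but the degree bookkeeping --- which you correctly flag as ``the one requiring care'' --- is precisely the step you got wrong.
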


\medskip

For nonnegative integers $a_0,a_1,\ldots, a_n$, F. J. Dyson
\cite{Dy} in 1962 conjectured that the constant term of
$\prod_{0\leqslant i\not=j\leqslant n}(1-\f{x_i}{x_j})^{a_j}$ is
$\frac{(a_0+\cdots+a_n)!}{a_0!\cdots a_n!}$. This conjecture was
proved independently by Gunson \cite{Gu} and by Wilson \cite{W}.
I. J. Good \cite{Go} in 1970 used the Lagrange interpolation
formula to provide a short proof. D. Zeilberger \cite{Z} gave a
combinatorial proof of Dyson's conjecture in the following
equivalent form
$$[x_0^{a_0}\cdots x_n^{a_n}]\prod_{0\leqslant i<j\leqslant n}
(x_i-x_j)^{a_i+a_j}=(-1)^{\sum_{j=0}^{n}(j+1)a_j}\frac{(a_0+\cdots+a_n)!}{a_0!\cdots
a_n!}.$$ Aomoto \cite{Ao} proved that the constant term of
$$\prod_{l=1}^{n}(1-\frac{x_l}{x_0})^{a+\chi (l\leqslant s)}
(1-\frac{x_0}{x_l})^b\prod_{1\leqslant i\not=j\leqslant
n}(1-\frac{x_i}{x_j})^{m}$$ is
$$\prod_{l=0}^{n-1}\frac{\big(a+b+ml+\chi(l\geqslant
n-s)\big)!(ml+m)!}{\big(a+ml+\chi(l\geqslant
n-s)\big)!(ml+b)!m!},$$ where $\chi(l\geqslant t)=1$ if
$l\geqslant t$, and $\chi(l\geqslant t)=0$ otherwise.

We can deduce the following result from Aomoto's identity.
\begin{proposition}\label{L1}
Let $m\in \N$ and $k,\ n\in \Z^+$. Suppose that $k_j\in \{k,k+1\}$
for $1\leqslant j\leqslant n$. If $k>m(n-1)$, then we have
\begin{align*}&[x_1^{k_1-1}\cdots x_n^{k_n-1}]\prod_{1\leqslant i\not=j\leqslant
n}(x_i-x_j)^{m}(x_1+\cdots+x_n)^{\sum_{j=1}^{n}(k_j-1)-mn(n-1)}
\\=&\Big(\prod_{j=0}^{s-1}\frac{1}{k-jm}\Big)\frac{(\sum_{j=1}^{n}k_j-mn^2+mn-n)!}{(m!)^n}
\prod_{j=1}^{n}\frac{(jm)!}{(k-1-jm+m)!},
\end{align*}
where $s=|\{1\leqslant j\leqslant n: k_j=k+1\}|$.
\end{proposition}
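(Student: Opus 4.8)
The plan is to extract the desired coefficient from Aomoto's constant-term identity by making judicious substitutions. First I would recall that for a Laurent polynomial, the ``constant term'' in the variables $x_0,\dots,x_n$ is exactly the coefficient that Aomoto computes; the strategy is to choose the parameters $a,b,s$ in Aomoto's identity so that, after clearing denominators (multiplying through by a suitable power of $x_0$), the constant-term extraction becomes the coefficient extraction $[x_1^{k_1-1}\cdots x_n^{k_n-1}]$ of the product $\prod_{1\le i\ne j\le n}(x_i-x_j)^m$ against a power of $(x_1+\cdots+x_n)$. Concretely, writing $x_0$ for the ``summing'' variable, the factor $\prod_{l=1}^n(1-x_l/x_0)$ raised to an appropriate exponent produces, upon multiplying by $x_0$ to that power, a polynomial in which the top-degree-in-$x_0$ piece is (up to sign) $(x_1+\cdots+x_n)^{(\text{that exponent})}$; matching this exponent to $\sum_{j=1}^n(k_j-1)-mn(n-1)$ and matching the exponents of the remaining $(1-x_0/x_l)^b$ factors to $b=0$ isolates precisely the generalized Vandermonde factor $\prod_{1\le i\ne j\le n}(x_i-x_j)^m$.

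The second step is bookkeeping: with $b=0$ the factors $(1-x_0/x_l)^b$ disappear, and the Aomoto exponent $a+\chi(l\le s)$ on $(1-x_l/x_0)$ must be chosen so that $a = $ something forcing the total $x_0$-degree to equal $\sum_j(k_j-1)-mn(n-1)$; here the parameter $s$ in Aomoto's identity is taken to be the same $s=|\{j:k_j=k+1\}|$ appearing in the Proposition, since the ``$+1$'' shifts in the exponents correspond exactly to the coordinates where $k_j=k+1$. After this identification, Aomoto's closed form
$$\prod_{l=0}^{n-1}\frac{\big(a+b+ml+\chi(l\ge n-s)\big)!\,(ml+m)!}{\big(a+ml+\chi(l\ge n-s)\big)!\,(ml+b)!\,m!}$$
specializes: with $b=0$ the term $(ml+b)!=(ml)!$, and reindexing $l\mapsto n-l$ or $l\mapsto l+1$ in the product turns $\prod_{l}(ml+m)!/(ml)!/m!$ into $\prod_{j=1}^n (jm)!/\big((k-1-jm+m)!\,m!\big)$, while the factors $\big(a+ml+\chi(l\ge n-s)\big)!$ telescope against $\big(a+b+ml+\chi(l\ge n-s)\big)!$ leaving a single factorial $(\sum_j k_j - mn^2+mn-n)!$ in the numerator and the product $\prod_{j=0}^{s-1}(k-jm)$ in the denominator (this last product is exactly what survives from the $\chi(l\ge n-s)$ shifts). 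The hypothesis $k>m(n-1)$ is needed precisely to guarantee that all the factorials appearing have nonnegative arguments, so that the formula is meaningful and no cancellation-of-poles subtlety arises.

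The main obstacle I anticipate is the precise alignment of indices and the tracking of the extra $\chi$-shifts: Aomoto's identity is stated with a ``new'' variable $x_0$ and with the asymmetric insertion $\chi(l\le s)$ on one set of factors and $\chi(l\ge n-s)$ in the answer, and one must verify carefully that, after multiplying by the correct power of $x_0$ and reading off the coefficient of $x_0^0$, the $s$ distinguished indices on the left really do correspond to the $s$ indices with $k_j=k+1$ and that the resulting extra factor is $\prod_{j=0}^{s-1}1/(k-jm)$ rather than some permuted variant. A clean way to handle this is to treat the case $s=0$ first (so all $k_j=k$, Aomoto reduces to a symmetric Morris-type constant term and the matching is transparent), and then observe that raising one exponent $k_j$ from $k$ to $k+1$ multiplies both the target coefficient and Aomoto's closed form by the same ratio, namely $1/(k-sm)$ at the $(s+1)$-th such increment; an induction on $s$ then yields the stated product $\prod_{j=0}^{s-1}1/(k-jm)$. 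Since the paper only needs $k_j\in\{k,k+1\}$, this two-step (base case plus one-variable increment) argument avoids having to manipulate the general multi-parameter Aomoto formula directly.
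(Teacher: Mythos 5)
Your overall instinct---that the result should fall out of Aomoto's identity after the right specialization---is the right starting point, but the central mechanism you propose does not work, and the missing ingredient is exactly the lemma of Gessel--Lv--Xin--Zhou (Lemma \ref{L2}). Aomoto's identity computes a constant term in $n+1$ variables $x_0,\dots,x_n$ in which no factor $(x_1+\cdots+x_n)^N$ ever appears, and no choice of $a,b,s$ or multiplication by a power of $x_0$ will make one appear: if you clear denominators in $\prod_{l=1}^n(1-x_l/x_0)^a$ you get $\prod_l(x_0-x_l)^a$, whose top-degree-in-$x_0$ piece is $x_0^{na}$ and whose coefficient of $x_0^{na-N}$ is $(-1)^N\sum_{j_1+\cdots+j_n=N}\prod_l\binom{a}{j_l}x_l^{j_l}$ --- not $(x_1+\cdots+x_n)^N$. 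The power of $x_1+\cdots+x_n$ only emerges \emph{asymptotically in $a$}: that coefficient is a polynomial in $a$ of degree $N$ whose leading term is $\frac{(-a)^N}{N!}(x_1+\cdots+x_n)^N$. This is precisely what Lemma \ref{L2} packages: the Aomoto constant term $f(a;b,s,m)$ is a polynomial in $a$, and its \emph{leading coefficient} in $a$ equals $\frac{1}{(a_1+\cdots+a_n)!}$ times the target coefficient of $\prod_{i\ne j}(x_i-x_j)^m(x_1+\cdots+x_n)^{a_1+\cdots+a_n}$. The paper then reads off the same leading coefficient from Aomoto's closed form and equates the two. Without this step (or an equivalent limiting argument) your ``judicious substitution'' cannot reach the stated identity. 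Relatedly, setting $b=0$ is wrong: the exponents $a_l=b+\chi(l\le s)$ must encode the target exponents via $a_l+(n-1)m=k_l-1$, which forces $b=k-(n-1)m-1$, generally nonzero; with $b=0$ you only see the borderline case $k=m(n-1)+1$. You also need the inversion $x_l\mapsto y_l^{-1}$ to move the $\chi$-shift from the $a$-side of Aomoto's identity to the $b$-side, which you do not address.

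Your fallback argument (base case $s=0$, then induct on $s$) has a circularity: you assert that raising one $k_j$ from $k$ to $k+1$ multiplies the \emph{target coefficient} by $1/(k-sm)$. That ratio can be verified for Aomoto's closed form, but for the target coefficient it is exactly the content of the proposition; nothing in your argument establishes it independently. So the induction does not rescue the proof. To repair the argument you would need to prove (or invoke) Lemma \ref{L2}, identify the parameters as $a_0=a$, $a_l=b+\chi(l\le s)$, $L=\prod_{i\ne j}(1-x_i/x_j)^m$, and extract the leading coefficient in $a$ from the explicit formula \eqref{eq22} before substituting $b=k-(n-1)m-1$.
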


In order to prove Proposition \ref{L1}, we also need the following
result (see Lemma 2.1 and Corollary A.1 in \cite{GLXZ}).

\begin{lemma}[Gessel-Lv-Xin-Zhou]\label{L2}
Let $a_0,a_1,\ldots, a_n$ be nonnegative integers, and let
$L(x_1,\ldots,x_n)$ be a Laurent polynomial independent of $a_0$.
Then the constant term of
$$\prod_{l=1}^{n}(1-\frac{x_l}{x_0})^{a_0}(1-\frac{x_0}{x_l})^{a_l}\,L(x_1,\ldots,x_n)$$
is a polynomial in $a_0$ for fixed $a_1,\ldots, a_n$ and the
leading coefficient of such polynomial in $a_0$ coincides with the
constant term of
$$\f{1}{(a_1+\cdots+a_n)!}(x_1+\cdots+x_n)^{a_1+\cdots+a_n}
\prod_{l=1}^{n}x_l^{-a_l}\,L(x_1,\ldots,x_n).$$
\end{lemma}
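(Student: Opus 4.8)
The plan is to reduce the whole statement to one explicit coefficient extraction, from which both the polynomiality in $a_0$ and the identification of the leading coefficient fall out of the multinomial theorem. Throughout write $A=a_1+\cdots+a_n$. First I would clear denominators using $1-x_l/x_0=x_0^{-1}(x_0-x_l)$ and $1-x_0/x_l=x_l^{-1}(x_l-x_0)$, together with $(x_l-x_0)^{a_l}=(-1)^{a_l}(x_0-x_l)^{a_l}$, to obtain
$$\prod_{l=1}^{n}\Big(1-\tfrac{x_l}{x_0}\Big)^{a_0}\Big(1-\tfrac{x_0}{x_l}\Big)^{a_l}=(-1)^{A}\,x_0^{-na_0}\Big(\prod_{l=1}^{n}x_l^{-a_l}\Big)\prod_{l=1}^{n}(x_0-x_l)^{a_0+a_l}.$$
Since $a_0,a_1,\ldots,a_n$ are nonnegative integers and $L$ is a Laurent polynomial, the whole expression is a Laurent polynomial, so I may take its constant term in $x_0$ first. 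As the only $x_0$-dependent factor is $x_0^{-na_0}\prod_l(x_0-x_l)^{a_0+a_l}$, this reduces to extracting the coefficient of $x_0^{na_0}$ in $\prod_{l=1}^{n}(x_0-x_l)^{a_0+a_l}$.

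Next I would expand this coefficient by the binomial theorem in each factor. Letting $i_l$ denote the exponent of $x_l$, the requirement that the total $x_0$-degree equal $na_0$ forces $i_1+\cdots+i_n=A$ \emph{independently of $a_0$}, and collecting signs gives
$$[x_0^{na_0}]\prod_{l=1}^{n}(x_0-x_l)^{a_0+a_l}=(-1)^{A}\sum_{\substack{i_1,\ldots,i_n\ge 0\\ i_1+\cdots+i_n=A}}\ \prod_{l=1}^{n}\binom{a_0+a_l}{i_l}x_l^{i_l}.$$
Here I read $\binom{a_0+a_l}{i_l}$ as the polynomial $\frac{1}{i_l!}(a_0+a_l)(a_0+a_l-1)\cdots(a_0+a_l-i_l+1)$ in $a_0$; for a nonnegative integer $a_0$ with $a_0+a_l<i_l$ one of its factors vanishes, so it equals the usual binomial coefficient at every nonnegative integer $a_0$ and is $0$ precisely when the combinatorial term is absent. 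The two factors $(-1)^{A}$ then cancel, and writing $\mathrm{CT}_{x_0}$ for the constant term in $x_0$ I get
$$\mathrm{CT}_{x_0}\Big[\prod_{l=1}^{n}\Big(1-\tfrac{x_l}{x_0}\Big)^{a_0}\Big(1-\tfrac{x_0}{x_l}\Big)^{a_l}L\Big]=\Big(\prod_{l=1}^{n}x_l^{-a_l}\Big)L\sum_{\substack{i_1,\ldots,i_n\ge 0\\ i_1+\cdots+i_n=A}}\ \prod_{l=1}^{n}\binom{a_0+a_l}{i_l}x_l^{i_l}.$$

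Now I would apply the constant-term operator in $x_1,\ldots,x_n$, which does not involve $a_0$. Because the index set $\{i_1,\ldots,i_n\ge0:\ \sum i_l=A\}$ is finite and fixed once $A$ is fixed, and each coefficient $\prod_l\binom{a_0+a_l}{i_l}$ is a polynomial in $a_0$ of degree $\sum_l i_l=A$, linearity of the constant term shows the full constant term is a polynomial in $a_0$; this is the first assertion. For the leading coefficient I would extract the top power $a_0^{A}$: the $a_0$-leading coefficient of $\binom{a_0+a_l}{i_l}$ is $1/i_l!$, so $[a_0^{A}]\prod_l\binom{a_0+a_l}{i_l}=\prod_l 1/i_l!$, and the multinomial theorem yields $\sum_{\sum i_l=A}\frac{1}{i_1!\cdots i_n!}x_1^{i_1}\cdots x_n^{i_n}=\frac{1}{A!}(x_1+\cdots+x_n)^{A}$. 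Hence the coefficient of $a_0^{A}$ in the constant term equals the constant term in $x_1,\ldots,x_n$ of $\frac{1}{A!}(x_1+\cdots+x_n)^{A}\prod_l x_l^{-a_l}L$, which is exactly the claimed formula with $A=a_1+\cdots+a_n$.

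The binomial expansions are routine, so I do not expect a deep obstacle; the one step needing genuine care is the remark that $\binom{a_0+a_l}{i_l}$, read as a polynomial in $a_0$, already vanishes at the nonnegative integers where the combinatorial term drops out. This is what upgrades an \emph{eventually} polynomial formula (valid only for large $a_0$) into a genuine polynomial identity valid for every nonnegative integer $a_0$, and what lets me identify the leading coefficient unambiguously as the coefficient of $a_0^{\,a_1+\cdots+a_n}$.
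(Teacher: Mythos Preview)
Your argument is correct. The paper does not actually prove this lemma: it is quoted as a known result from Gessel--Lv--Xin--Zhou \cite{GLXZ} (their Lemma~2.1 and Corollary~A.1), so there is no proof in the paper to compare against. What you have supplied is a clean, self-contained derivation that would make the paper independent of that citation.

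A couple of remarks on presentation. First, the crucial observation you flag at the end---that the polynomial $\binom{a_0+a_l}{i_l}$ vanishes at precisely those nonnegative integers $a_0$ for which the combinatorial term is absent---is exactly what justifies summing over the \emph{fixed} index set $\{(i_1,\dots,i_n):\sum i_l=A\}$ for every $a_0$, and hence what proves polynomiality rather than merely eventual polynomiality; it is worth stating this before the displayed expansion rather than as a postscript. Second, your computation shows that the coefficient of $a_0^{A}$ (with $A=a_1+\cdots+a_n$) equals the stated constant term; strictly speaking the polynomial could have degree smaller than $A$ if that constant term happens to vanish, so ``leading coefficient'' in the lemma should be read as ``coefficient of $a_0^{a_1+\cdots+a_n}$''. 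In the paper's application (Proposition~\ref{L1}) the explicit product formula~\eqref{eq22} visibly has degree $A$ in $a$, so the distinction is harmless there, but it is worth noting.
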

\medskip

\begin{proof}[Proof of Proposition \ref{L1}] Without loss of generality,
we assume that $k_1=\cdots=k_s=k+1$ and $k_{s+1}=\cdots=k_n=k$ for
some $s$. For nonnegative integers $a$, $b$, we define
$f(a;b,s,m)$ to be the constant term of
$\mathcal{F}(x_0,x_1,\ldots,x_n)$, where
$$\mathcal{F}(x_0,x_1,\ldots,x_n)=
\prod_{l=1}^{n}(1-\f{x_l}{x_0})^{a}(1-\f{x_0}{x_l})^{b+\chi
(l\leqslant s)}\prod_{1\leqslant i\not=j\leqslant
n}(1-\f{x_i}{x_j})^{m}.$$ On substituting $x_l=y_l^{-1}$ for
$0\leqslant l\leqslant n$, we get
$$\mathcal{F}(y_0^{-1},y_1^{-1},\ldots,y_n^{-1})=\mathcal{G}(y_0,y_1,\ldots,y_n),$$ where
$$\mathcal{G}(y_0,y_1,\ldots,y_n)=
\prod_{l=1}^{n}(1-\f{y_0}{y_l})^{a}(1-\f{y_l}{y_0})^{b+\chi
(l\leqslant s)}\prod_{1\leqslant i\not=j\leqslant
n}(1-\f{y_j}{y_i})^{m}.$$ We observe
\begin{align}\label{G}\mathcal{G}(y_0,y_1,\ldots,y_n)=
\prod_{l=1}^{n}(1-\f{y_l}{y_0})^{b+\chi (l\leqslant
s)}(1-\f{y_0}{y_l})^{a}\prod_{1\leqslant i\not=j\leqslant
n}(1-\f{y_i}{y_j})^{m}.\end{align} One can see that $f(a;b,s,m)$
is equal to the constant term of
$\mathcal{G}(y_0,y_1,\ldots,y_n)$. By (\ref{G}) and Aomoto's
identity, we conclude
\begin{equation}\label{eq22}f(a;b,s,m)=\prod_{l=0}^{n-1}\f{\big(a+b+ml+\chi(l\geqslant
n-s)\big)!(ml+m)!}{(ml+a)!\big(b+ml+\chi(l\geqslant n-s)\big)!m!}.
\end{equation} Then applying Lemma \ref{L2} with $a_0=a$,
$a_l=b+\chi(l\leqslant s)$ for $1\leqslant l\leqslant n$ and
$L(x_1,\ldots,x_n)=\prod_{1\leqslant i\not=j\leqslant
n}(1-\f{x_i}{x_j})^{m}$, the leading coefficient of $f(a;b,s,m)$
in $a_0$ coincides with the constant term of
\begin{align*}\f{1}{(a_1+\cdots+a_n)!}(x_1+\cdots+x_n)^{a_1+\cdots+a_n}\prod_{l=1}^{n}x_l^{-a_l}\,\prod_{1\leqslant
i\not=j\leqslant n}(1-\f{x_i}{x_j})^{m}.\end{align*} Note that
\begin{align*}&(x_1+\cdots+x_n)^{a_1+\cdots+a_n}\prod_{l=1}^{n}x_l^{-a_l}\,\prod_{1\leqslant
i\not=j\leqslant
n}(1-\f{x_i}{x_j})^{m}\\=&(x_1+\cdots+x_n)^{a_1+\cdots+a_n}\prod_{l=1}^{n}x_l^{-(a_l+(n-1)m)}\,\prod_{1\leqslant
i\not=j\leqslant n}(x_i-x_j)^{m}.\end{align*} The constant term of
$(x_1+\cdots+x_n)^{a_1+\cdots+a_n}\prod_{l=1}^{n}x_l^{-a_l}\,\prod_{1\leqslant
i\not=j\leqslant n}(1-\f{x_i}{x_j})^{m}$ is
$$\Big[\prod_{l=1}^{n}x_l^{a_l+(n-1)m}\Big]\prod_{1\leqslant
i\not=j\leqslant
n}(x_i-x_j)^{m}\,(x_1+\cdots+x_n)^{a_1+\cdots+a_n}.$$ By
(\ref{eq22}), the leading coefficient of $f(a;b,s,m)$ in $a$ is
$$\prod_{l=0}^{n-1}\f{(ml+m)!}{\big(b+ml+\chi(l\geqslant n-s)\big)!m!}.$$
Now we can conclude that
\begin{equation}\label{fin}
\begin{split}[\prod_{l=1}^{n}x_l^{a_l+(n-1)m}]&\prod_{1\leqslant i\not=j\leqslant
n}(x_i-x_j)^{m}\,(x_1+\cdots+x_n)^{a_1+\cdots+a_n}\\=\f{(a_1+\cdots+a_n)!}{(m!)^n}&\prod_{l=0}^{n-1}\f{(ml+m)!}{\big(b+ml+\chi(l\geqslant
n-s)\big)!}. \end{split}\end{equation} On substituting
$b=k-(n-1)m-1$, we get the desired result from (\ref{fin}). The
proof is completed.
\end{proof}

\medskip

\section{Proofs of Theorems \ref{t2}-\ref{t4}}

\medskip

\begin{proof}[Proof of Theorem \ref{t2}] Since (\ref{1.2}) holds trivially if $\sum_{j=1}^n(|A_j|-1)-mn(n-1)<0$. Below we assume that
$\sum_{j=1}^n(|A_j|-1)\geqslant mn(n-1)$. Define
$$P(x_1,\cdots,x_n)=\prod_{1\leqslant i\not=j\leqslant n}\prod_{s\in
S_{ij}}(x_i-x_j-s).$$ We observe that $\deg(P)=mn(n-1)$, and
\begin{align*}C=\{a_1+\cdots+a_n:a_1\in A_1,\ldots,a_n\in
A_n, P(a_1,\ldots,a_n)\neq 0\}.
\end{align*}
Our objective is to
prove\begin{equation}\label{not0}[x_1^{|A_1|-1}\cdots
x_n^{|A_n|-1}]P(x_1,\cdots,x_n)(x_1+\cdots+x_n)^{\sum_{j=1}^n(|A_j|-1)-\deg
P} \not=0.
\end{equation}Then the desired conclusion follows from Lemma
\ref{L0}. Note that
\begin{align*}&[x_1^{|A_1|-1}\cdots
x_n^{|A_n|-1}]P(x_1,\cdots,x_n)(x_1+\cdots+x_n)^{\sum_{j=1}^n(|A_j|-1)-\deg
P}
\\=&[x_1^{|A_1|-1}\cdots x_n^{|A_n|-1}]\prod_{1\leqslant i\not=j\leqslant
n}(x_i-x_j)^{m}(x_1+\cdots+x_n)^{\sum_{j=1}^{n}(|A_j|-1)-mn(n-1)}.
\end{align*}Let $s = \{1 \le j \le n : |A_j| = k + 1\}$. By Proposition \ref{L1},
\begin{align*}&[x_1^{|A_1|-1}\cdots
x_n^{|A_n|-1}]P(x_1,\cdots,x_n)(x_1+\cdots+x_n)^{\sum_{j=1}^n(|A_j|-1)-\deg
P}
\\=&\Big(\prod_{j=0}^{s-1}\frac{1}{k-jm}\Big)\frac{(\sum_{j=1}^{n}|A_j|-mn^2+mn-n)!}{(m!)^n}
\prod_{j=1}^{n}\frac{(jm)!}{(k-1-jm+m)!}.
\end{align*}
On recalling the condition $p(F)>\max\{mn,
\sum_{j=1}^n(|A_j|-1)-mn(n-1)\}$, we have established
(\ref{not0}). The proof of Theorem \ref{t2} is completed.
\end{proof}

\medskip
\begin{proof}[Proof of Theorem \ref{t3}]
In view of Theorem \ref{t1}, we only need to consider the case
$p(F)\leqslant n(k-1)-mn(n-1)$. Set
$k'=\lfloor\frac{p(F)-1}{n}\rfloor+m(n-1)+1$. Then there exist
subsets $A_j'\subseteq A_j$ for $1\leqslant j\leqslant n$ such
that
\begin{align*}\sum_{j=1}^{n}(|A_j'|-1)-mn(n-1)=p(F)-1\ \textrm{ and }\ |A_j'|\in
\{k',k'+1\}.\end{align*} We easily see that
\begin{align*}C\supseteq\{a_1+\cdots+a_n:a_1\in A_1',\ldots,a_n\in A_n', a_i-a_j\not\in
S_{ij}\ \textrm{if}\ i\not=j\}.
\end{align*}
Then we apply Theorem \ref{t2} to deduce that
\begin{align*}|C|\geqslant&\ |\{a_1+\cdots+a_n:a_1\in A_1',\ldots,a_n\in A_n', a_i-a_j\not\in
S_{ij}\ \textrm{if}\ i\not=j\}| \\  \geqslant&\
\sum_{j=1}^n(|A_j'|-1)-mn(n-1)+1 \\ =&\ p(F)=\min\big\{p(F), \
n(k-1)-mn(n-1)+1\big\}.
\end{align*}This completes the proof.
\end{proof}
\medskip
\begin{proof}[Proof of Theorem \ref{t4}] Note that since
$n=\lfloor\f{|A|-1+m}{2m}\rfloor$, we have $|A|-1+m\geqslant 2mn$.
Since $A$ is a subset of $F=\Z/p\Z$, we deduce that $p\geqslant
|A|>2mn-m\geqslant mn$. The conclusion of Theorem \ref{t4} is
equivalent to
\begin{equation*}\{a_1+\cdots+a_n:a_j\in A, a_i-a_j\not\in S
\ \text{if}\ i\not=j \}=\Z/p\Z.
\end{equation*}By Theorem \ref{t3}, it suffices to prove
\begin{align*}p\leqslant n(|A|-1)-mn(n-1)+1.\end{align*}

Let $r=2m\big(\f{|A|-1+m}{2m}-n\big)$. We can see that
$|A|-1+m=2mn+r$ and $0\leqslant r\leqslant 2m-1$. Then we obtain
\begin{align}\label{cong}(|A|-1+m)^2=(2mn+r)^2=r^2+4m(mn^2+nr)\equiv r^2\pmod{4m}.\end{align}
In view of the condition $|A|\geqslant \sqrt{4mp+4m(m-3)+2}-m+1$,
we have
\begin{align*}(|A|-1+m)^2\geqslant
4mp+4m(m-3)+2=&4m(p-1)+(2m-1)^2-(4m-1) .\end{align*} Therefore,
\begin{align}\label{lower}(|A|-1+m)^2\geqslant
4m(p-1)+r^2-(4m-1).\end{align} It follows from (\ref{cong}) and
(\ref{lower}) that $(|A|-1+m)^2\geqslant 4m(p-1)+r^2$. This
implies
$$n(|A|-1)-mn(n-1)+1=\f{(|A|-1+m)^2-r^2}{4m}+1\geqslant p.$$
We complete the proof of Theorem \ref{t4}.
\end{proof}
\medskip

{\bf Acknowledgement}. The author would like to thank referees for
the comments and suggestions.

 \vskip5mm

\vskip4mm
\end{document}